\documentclass[12pt]{article}
\usepackage[utf8]{inputenc}
\usepackage[T1]{fontenc}
\usepackage{textcomp}
\usepackage{amsmath,amssymb,amsthm}
\usepackage{lmodern}%
\usepackage{fourier}
\usepackage{mathrsfs}
\usepackage[a4paper]{geometry}
\usepackage{graphicx}
\usepackage{microtype}
\usepackage{mathtools}
\usepackage{hyperref}
\hypersetup{pdfstartview=XYZ}

\geometry{papersize={21cm,29.7cm}}
\geometry{top=4cm,bottom=3.5cm,hmargin=2cm}

\DeclareMathOperator{\ex}{ex}
\DeclareMathOperator{\la}{La}

\renewcommand{\le}{\leqslant}
\renewcommand{\ge}{\geqslant}

\renewcommand{\geq}{\geqslant}

\newcommand{\abs}[1]{\left\lvert#1\right\rvert}

\newcommand{\littleo}[1]{o\mathopen{}\left(#1\right)}
\newcommand{\ome}[1]{\Omega\mathopen{}\left(#1\right)}

\newcommand{\La}{{\rm La}}

\newcommand{\lanp}{\La(n,P)}
\newcommand{\B}{\mathcal{B}}

\newcommand{\intervalle}[4]{#1#2\mathpunct{}\nonscript\,,#3#4}

\newcommand{\ioo}[2]{\intervalle{\left(}{#1}{#2}{\right)}}

\newtheorem{proposition}{Proposition}[section]
\newtheorem{theorem}[proposition]{Theorem}

\begin{document}
\title{Supersaturation in the Boolean lattice}

\author{Andrew P. Dove\thanks{Department of Mathematics, University of South
Carolina, Columbia, SC, USA 29208. Email: \texttt{doveap@mailbox.sc.edu}.}
\and
Jerrold R. Griggs\thanks{Department of Mathematics, University of South
Carolina, Columbia, SC, USA 29208. Email: \texttt{griggs@math.sc.edu}.
Supported in part by a visiting professorship at Univ. Paris Diderot.}
\and
Ross J. Kang\thanks{Mathematical Institute, Utrecht University.
Email: \texttt{ross.kang@gmail.com}. This author is supported by a Veni grant from the Netherlands Organisation for Scientific Research (NWO).}
\and
Jean-S\'ebastien Sereni\thanks{CNRS (LORIA), Vand{\oe}uvre-l\`es-Nancy,
France. Email: \texttt{sereni@kam.mff.cuni.cz}. This author's work was partially
supported by the French \emph{Agence Nationale de la Recherche} under reference
\textsc{anr 10 jcjc 0204 01}.}
}

\maketitle

\begin{abstract}
We prove a ``supersaturation-type'' extension of both
Sperner's Theorem (1928) and its generalization by Erd\H{o}s (1945)
to $k$-chains.
Our result implies that a largest family whose size is $x$
more than the size of a largest $k$-chain free family and that
contains the minimum number of $k$-chains is the family formed
by taking the middle $(k-1)$ rows of the Boolean lattice and
$x$ elements from the $k$th middle row.
We prove our result using the symmetric chain decomposition
method of de Bruijn, van Ebbenhorst Tengbergen, and Kruyswijk (1951).
\end{abstract}

\section{Introduction}
A core topic of extremal graph theory is the study of
``Tur\'an-type questions'': fix a
(finite) graph $H$ and a positive integer $n$.
What is the largest number $\ex(n,H)$ of edges
in an $n$-vertex graph that contains no copy of $H$?
More than a hundred years ago,
Mantel~\cite{Man07} answered this question in the case where $H$ is
$K_3$, the triangle.
About forty years later,
Tur\'an~\cite{Tur41} generalized this to all complete graphs.
More precisely, the \emph{Tur\'an graph $T(n,r)$} is
the complete $r$-partite graph of order $n$ with parts of size
$\lfloor n/r\rfloor$ or $\lceil n/r\rceil$.
Not only did Tur\'an prove that $T(n,r)$ has the largest number of edges among
all $n$-vertex graphs with no copies of $K_{r+1}$,
that is,
$\ex(n,r)=\abs{E(T(n,r))}$,
but also he proved that
all other $n$-vertex graphs containing no copies of $K_{k+1}$
have strictly fewer edges than $T(n,r)$.

The theory of graph supersaturation deals with the situation
beyond the threshold given by $\ex(n,H)$. Specifically, define
$\ell(n,H,q)$ as the least number of copies of $H$ in an
$n$-vertex graph with at least $\ex(n,H)+q$ edges. By the
definition, we know that $\ell(n,H,q)\ge1$ as soon as $q>0$, but
it turns out that an extra edge is likely to create many more
copies of $H$. Arguably, the first result in this direction was
proved in an unpublished work of Rademacher from 1941 (orally communicated
to Erd\H{o}s~\cite{Erd55}): while
Mantel's theorem states that every $n$-vertex graph with more than
$\abs{E(T(n,2))}=\lfloor n/2\rfloor \lceil n/2\rceil$ edges
contains a triangle, Rademacher established that such graphs
contain, actually, at least $\lfloor n/2\rfloor$ triangle copies.

This result was generalized by Erd\H{o}s, who proved that
$\ell(n,K_3,q)\geq\lfloor n/2\rfloor$ first if $q\in\{1,2,3\}$ in
1955~\cite{Erd55} and a few years later in the case $q<c\cdot n/2$ for a fixed constant
$c\in\ioo{0}{1}$~\cite{Erd62b}. More than twenty years
later, Lov\'asz and Simonovits~\cite{LoSi83} established the
following theorem, thereby confirming a conjecture of Erd\H{o}s.
\begin{theorem}[Lov\'asz and Simonovits (1983)]
Let $n$ and $q$ be positive integers.
If $q<n/2$, then $\ell(n,K_3,q)\ge q\cdot\lfloor n/2\rfloor$.
\end{theorem}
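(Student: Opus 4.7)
The plan is to combine an edge-counting argument with a structural bipartition analysis. Put $m := \lfloor n^2/4 \rfloor + q$ and let $t$ be the number of triangles of $G$. The starting point is the Moon--Moser identity $3t = \sum_{uv \in E(G)}|N(u)\cap N(v)|$, from which the bound $|N(u)\cap N(v)| \geq d(u)+d(v)-n$ combined with Cauchy--Schwarz yields
\[
 3t \;\geq\; \sum_{v} d(v)^{2} - mn \;\geq\; \frac{4m^{2}}{n} - mn \;=\; \frac{m(4m-n^{2})}{n},
\]
which for our $m$ gives roughly $qn/3$ triangles. Since the target is $q\lfloor n/2\rfloor \sim qn/2$, this generic approach falls short by a factor of $3/2$, and the task becomes to locate the missing slack.

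To exhibit it I would take a max-cut partition $V(G)=A\sqcup B$ with $|A|\leq |B|$, so that $d_B(v)\geq d_A(v)$ for every $v\in A$ and symmetrically for $B$. The number of crossing edges is at most $|A|\,|B|\leq \lfloor n^2/4\rfloor$, so at least $q$ edges must lie entirely inside $A$ or inside $B$; call these edges \emph{bad}. For each bad edge $uv\subseteq B$ the number of triangles through $uv$ with apex in $A$ is at least $d_A(u)+d_A(v)-|A|$, and in the candidate extremal graph (namely $K_{\lfloor n/2\rfloor,\lceil n/2\rceil}$ with $q$ extra edges placed inside the larger part $B$) this quantity equals exactly $|A|=\lfloor n/2\rfloor$, so the bad edges alone account for $q\lfloor n/2\rfloor$ triangles.

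The remaining issue is to argue that whenever some bad edge $uv$ has $d_A(u)+d_A(v)-|A|$ much smaller than $\lfloor n/2\rfloor$, there must be compensatingly more bad edges or more triangles from other sources. I would try this by a convexity/double-counting argument: summing $d_A(u)+d_A(v)-|A|$ over the bad edges and regrouping by vertex produces a term of the form $\sum_{v\in B}d_B(v)\,d_A(v)$, which the max-cut condition bounds from below in a usable way. An alternative route is a Zykov-type symmetrization, merging non-adjacent vertices of comparable degree to push $G$ toward the extremal configuration without increasing~$t$, and then verifying the bound directly on the simplified graph.

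The main obstacle I anticipate is exactly this last step. The Cauchy--Schwarz slack needed to close the factor-$3/2$ gap is large only when the degree sequence is far from regular and the cut is far from balanced; conversely, near-regular and near-balanced graphs force most bad edges to behave essentially as in the extremal example. Turning this informal dichotomy into a quantitative estimate, and also handling the odd-$n$ parity together with the boundary regime where $q$ approaches $n/2$ (where the hypothesis only barely rules out competing near-extremal structures), will likely be the most delicate parts of the argument.
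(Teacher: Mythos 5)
This statement is quoted in the paper purely as background from the Lov\'asz--Simonovits paper; the authors give no proof of it, so the only question is whether your argument stands on its own. It does not: what you have written is a plan with the decisive step missing, and you say so yourself. The rigorous part of your argument (Moon--Moser plus Cauchy--Schwarz) gives only about $qn/3$ triangles, and everything that would close the gap to $q\lfloor n/2\rfloor$ is left at the level of intention. Concretely, the max-cut analysis only yields that a bad edge $uv\subseteq B$ lies in at least $\max\{0,\,d_A(u)+d_A(v)-|A|\}$ triangles with apex in $A$, and the max-cut property gives merely $d_A(u)\ge d_B(u)\ge 1$, nowhere near $d_A(u)\approx|A|$; a graph can concentrate its surplus edges on vertices of small crossing degree, so that each bad edge individually contributes essentially nothing via this count, and then the entire burden falls on the unproved ``compensation'' step. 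That step is not a technical afterthought: it is the whole content of the theorem. In the literature it is handled either by Erd\H{o}s's delicate vertex-deletion induction (for $q$ small relative to $n$) or by the Lov\'asz--Simonovits weight-function/progressive-induction machinery, neither of which is a routine convexity or double-counting estimate of the kind you sketch.

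Two further cautions. First, the Zykov-type symmetrization you propose as an alternative is not obviously compatible with this minimization problem: merging non-adjacent vertices preserves neither the triangle count nor the ``surplus over $\lfloor n^2/4\rfloor$'' in a controlled way, so ``push $G$ toward the extremal configuration without increasing $t$'' needs a genuine proof, not an appeal to the Tur\'an-theorem use of symmetrization. Second, the regime $q$ close to $n/2$ that you flag is exactly where the statement becomes tight and where naive local estimates are known to be insufficient, so the ``quantitative dichotomy'' you hope for cannot be waved through. As it stands, the proposal establishes a bound of order $qn/3$ and an aspiration; it does not prove the theorem.
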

\noindent
In addition, Lov\'asz and Simonovits~\cite{LoSi83} determined
$\ell(n,K_r,q)$ when $q=\littleo{n^2}$.
Their techniques do not apply, though, for the case where $q=\ome{n^2}$.
Solutions to this difficult problem were provided recently with the aid of flag algebras:
first, by Razborov~\cite{Raz08} for $H=K_3$,
then, by Nikiforov~\cite{Nik11} for $H=K_4$ and,
finally, by Reiher~\cite{Rei11} for the general case $H = K_r$.

Supersaturation results have not to our knowledge been studied as
extensively in other important areas of extremal combinatorics. In
this paper, we pursue this direction for extremal set
theory.

Let the \emph{Boolean lattice $\mathcal{B}_n$} be the poset
$(2^{[n]},\subseteq)$ of all subsets of the set $[n] =
\{1,\dots,n\}$, ordered by inclusion. For a set $S$, the
collection of all $k$-subsets of $S$ is denoted by $\binom{S}{k}$.
Following notation of previous work~\cite{GLL12}, by $\B(n,k)$ and $\Sigma(n,k)$ we mean the
families of subsets of $[n]$ of the $k$ middle sizes and the size
of the families. More precisely,
\begin{align*}
\B(n,k)=\binom{[n]}{\left\lfloor
\frac{n-k+1}2 \right\rfloor}\cup\cdots \cup\binom{[n]}{\left\lfloor \frac{n+k-1}2 \right\rfloor}
\ \ \text{ or } \ \
\B(n,k)= \binom{[n]}{\left\lceil \frac{n-k+1}2 \right\rceil}\cup\cdots\cup
\binom{[n]}{\left\lceil \frac{n+k-1}2 \right\rceil}
\end{align*}
(so, depending on the parity
of $n$ and $k$, this can be either one or two different families).

Given two finite posets $P = (P,\le)$ and $P' = (P',\le')$, we say
that \emph{$P'$ contains $P$} (or \emph{$P$ is a  (weak) subposet of
$P'$}), if there is an injection $f: P \to P'$ that preserves the
partial ordering, i.e.~if $u \le v$ in $P$, then $f(u) \le' f(v)$
in $P'$.
We let $\mathcal{P}_k$ be the $k$-element totally ordered
poset (chain).

What is the largest size $\lanp$ of a family of subsets of $[n]$
that does not contain $P$? The foundational result of this kind,
Sperner's Theorem~\cite{Spe28} from 1928, answers this question
for a two-element chain: $\la(n,\mathcal{P}_2) = \Sigma(n,1)$. Moreover,
the value $\la(n,\mathcal{P}_2)$ is attained only by $\B(n,1)$, which consists
of subsets all of (a) middle size. Erd\H{o}s~\cite{Erd45} almost two
decades later generalized this to $P = \mathcal{P}_k$, showing
that $\la(n,\mathcal{P}_k) = \Sigma(n,k-1)$, which is attained only by  $\mathcal{B}(n,k-1)$. Katona has championed the $\la(n,P)$
problem for posets $P$ other than $k$-chains, and this is
a challenging area of extremal set theory.  It is often very
difficult to obtain the extremal size $\lanp$ of such a family,
even asymptotically (see~\cite{GLL12} for a survey).

However, for those $P$ for which we know the exact threshold, we
can ask how many copies of $P$ must be present in families larger
than the threshold $\lanp$. Here we investigate the simplest
instance of this problem, when $P$ is a chain.  Analogous to the
way that Rademacher and Erd\H{o}s (and subsequent researchers)
have extended the theorems of Mantel and Tur\'an, we present a
supersaturation extension of Sperner's Theorem and its $k$-chain
generalization by Erd\H{o}s.

Our initial result was a lower bound on the number of
$\mathcal{P}_2$'s in a family $F\subseteq2^{[n]}$ of a given
size that is optimal for $\abs{F}\le \Sigma(n,2)$, extending Sperner's Theorem.  By investigating
more examples, we came to believe that for any size $\abs{F}$, with
$\Sigma(n,\ell)\le \abs{F}\le \Sigma(n,\ell+1)$, the number of
$\mathcal{P}_2$'s in $F$ is minimized by taking $F$ to consist of
$\B(n,\ell)$ together with subsets of $\B(n,\ell+1)$.
In further exploration of problems related to poset-free families of subsets, we came across the work of Kleitman~\cite{Kl68} from 1968, which corroborates our findings and intuition.
Indeed, Kleitman, albeit with matching theory techniques, obtained the (same) supersaturation extension of Sperner's Theorem and more.  This settled a conjecture of Erd\H{o}s and Katona.  In particular, he determined the minimum number of pairs
$(A,B)$ with $A\subset B$ in a family $F\subseteq 2^{[n]}$ of \emph{any}
given size.  As we had intuited, taking the subsets of some middle
sizes attains the optimum.

One particularly nice way to quickly derive Sperner's Theorem and
its generalization by Erd\H os  is to employ the remarkable
symmetric chain decomposition (\emph{SCD}, for short) of all $2^n$
subsets of $[n]$, discovered by de Bruijn, van Ebbenhorst
Tengbergen, and Kruyswijk~\cite{BEK51} in 1951.  It is a partition
of the Boolean lattice into just $\binom{n}{\lfloor n/2 \rfloor}$ disjoint chains of
subsets, where for each chain there is some $k\le n/2$ such
that the chain consists of a subset of each size from $k$ to
$n-k$.  For all $k$ the decomposition induces the best possible
upper bound on $\abs{F}$ for a $\mathcal{P}_k$-free family $F$ of
subsets of $[n]$.  (It requires some
additional arguments to obtain the extremal families.) The construction, which is obtained by a
clever inductive argument, was done originally in the more general
setting of a product of chains.  In this way, the authors obtained
the extension of Sperner's Theorem to the lattice of divisors of
an integer $N$.

There is a large literature on the existence of SCDs in posets and
other ordered/ranked set systems~\cite{GrKl78,Gri77,LDD94}. Greene
and Kleitman~\cite{GrKl76} discovered an explicit SCD of the
Boolean lattice for all $n$, based on a simple ``bracketing
procedure", as opposed to the original inductive construction.
Bracketing has proven to be valuable in its own right, such as for
the Littlewood-Offord problem~\cite{GrKl78} and for the
construction of symmetric Venn diagrams on~$n$ sets for all
prime~$n$~\cite{GKS04}.

It is not surprising then that a SCD of $\mathcal B_n$
yields a lower bound on the number of paths in a family $F$ of
given size.  In particular, if we arbitrarily consider one particular SCD, the number
of chains in $F$ that are also chains in the
SCD is minimized by taking the sets of $F$ to be of the
middle sizes. However, this argument does not account for the many containment
relations for pairs of subsets $A\subset B$ where $A$ and $B$ are on
different chains in the SCD.  To adjust for this, and to
exploit symmetry by avoiding bias towards a particular SCD, our
new idea here is to take all $n!$ SCDs obtained by permutation of the ground set $[n]$.  In this way, we obtain
lower bounds on the number of paths in a family $F$ of given size,
bounds that are best possible for small $F$.

Our main aim in this paper is then to prove the following
supersaturation extension of the theorems of Sperner and
Erd\H{o}s, using the above-outlined SCD approach.
\begin{theorem}\label{thm:supersaturation}
If a family $F$ of subsets of $[n]$ satisfies
$\abs F = x + \Sigma(n,k-1)$,
then there must be at least
\[
x\cdot\prod_{i=1}^{k-1}\left(\left\lfloor\frac{n+k}{2}\right\rfloor-i+1\right)
\]
copies of $\mathcal{P}_k$ in $F$.
\end{theorem}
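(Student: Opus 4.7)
The plan is to apply the symmetric chain decomposition (SCD) method outlined in the introduction, averaged over all $n!$ SCDs obtained by permuting the ground set. For each permutation $\pi\in S_n$, let $\mathcal{D}_\pi$ denote the SCD of $\mathcal{B}_n$ produced by the inductive construction of de Bruijn, van Ebbenhorst Tengbergen and Kruyswijk when the ground set is built up in the order $\pi(1),\pi(2),\ldots,\pi(n)$, and set
\[
N_\pi(F)\;:=\;\sum_{C\in\mathcal{D}_\pi}\binom{|F\cap C|}{k}.
\]
This counts the $k$-chains of $F$ contained within a single symmetric chain of $\mathcal{D}_\pi$; clearly $N_\pi(F)\le N_k(F)$, where $N_k(F)$ denotes the total number of $k$-chains in $F$.

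\textbf{Per-SCD estimate.} The first step is to show $N_\pi(F)\ge x$ for every $\pi$. The key identity is $\sum_{C\in\mathcal{D}_\pi}\min(|C|,k-1)=\Sigma(n,k-1)$, valid because every symmetric chain of $\mathcal{D}_\pi$ meets $\B(n,k-1)$ in exactly $\min(|C|,k-1)$ sets; this identity is precisely what drives Erd\H{o}s's theorem. Consequently, the excess $|F|-\Sigma(n,k-1)=x$ must be distributed among the chains, and the convexity of $\binom{\cdot}{k}$ ensures that each unit of excess placed in a chain whose intersection with $F$ has already reached $k-1$ contributes at least one within-chain $k$-chain, giving $N_\pi(F)\ge x$.

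\textbf{Double counting.} Summing over $\pi$ and swapping the summations yields
\[
\sum_{\pi\in S_n}N_\pi(F)\;=\;\sum_{\substack{\mathcal{C}\subseteq F\\|\mathcal{C}|=k}}m(\mathcal{C}),
\]
where $m(\mathcal{C})$ is the number of permutations $\pi$ for which $\mathcal{C}$ lies in a single chain of $\mathcal{D}_\pi$. By $S_n$-equivariance of the construction, $m(\mathcal{C})$ depends only on the size profile of $\mathcal{C}$ and is computable from the recursion (equivalently, from the Greene-Kleitman bracketing). The desired conclusion will follow from the uniform bound $m(\mathcal{C})\le n!/P$, where $P=\prod_{i=1}^{k-1}(\lfloor(n+k)/2\rfloor-i+1)$, for the $k$-chains $\mathcal{C}$ appearing in $F$: combining with the previous step then gives
\[
N_k(F)\;\ge\;\frac{\sum_\pi N_\pi(F)}{\max_{\mathcal{C}\subseteq F}m(\mathcal{C})}\;\ge\;\frac{n!\cdot x}{n!/P}\;=\;xP,
\]
as required.

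\textbf{Main obstacle.} The uniform multiplicity bound $m(\mathcal{C})\le n!/P$ is the delicate step and cannot hold for every $k$-chain of $\mathcal{B}_n$: for example $(\emptyset,[n])$ lies in the longest chain of every $\mathcal{D}_\pi$, so $m(\emptyset,[n])=n!$. Hence the argument must be refined, for instance by weighting the symmetric chains in the definition of $N_\pi$ by a length-dependent factor that downweights the long chains (restoring a uniform multiplicity bound while preserving the per-SCD estimate $\ge x$), or by establishing the sharper inequality $\sum_\pi N_\pi(F)\ge xP\cdot\max_{\mathcal{C}\subseteq F}m(\mathcal{C})$ that compensates for the extreme chains through the additional $k$-chains they force into $F$. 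The extremal family $\B(n,k-1)\cup X$ with $|X|=x$ in an adjacent middle layer exhibits tightness: there, $m(\mathcal{C})$ takes the value $n!/P$ precisely on the saturated $k$-chains $(T_1,\ldots,T_{k-1},S)$ with $T_i\in\B(n,k-1)$ and $S\in X$, so a direct computation from the recursion for such central size profiles should produce the identity $m(\mathcal{C})=n!/P$ and close the argument.
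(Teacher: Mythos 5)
Your setup is exactly the paper's: average over the $n!$ SCDs $\pi(\mathcal{C})$ obtained by permuting the ground set, show each one contains at least $x$ within-chain $k$-chains of $F$, double count to get $n!\,x\le\sum_{\mathcal{C}\subseteq F}m(\mathcal{C})$, and then try to bound the multiplicities $m(\mathcal{C})$ by $n!/P$ with $P=\prod_{i=1}^{k-1}\left(\left\lfloor\frac{n+k}{2}\right\rfloor-i+1\right)$. The first three steps are fine (your convexity argument for the per-SCD estimate is a slightly more explicit version of the paper's pigeonhole step). But the proposal stops precisely where the theorem has to be proved: you never establish a usable bound on $m(\mathcal{C})$, and the repairs you sketch (length-dependent weights, a compensating inequality, a computation of $m$ on the extremal saturated chains) are left unexecuted. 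Computing $m(\mathcal{C})=n!/P$ for the saturated chains ending in the $k$th middle row only certifies tightness; it does not control the contribution of the remaining chains, which is what the final inequality requires. So as it stands this is a plan plus an identified obstacle, not a proof.

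That said, your obstacle is genuine, and it is worth measuring it against the paper. The paper computes $m(\mathcal{C})=N(n,A_1,\dots,A_k)=n!/\max\{y,z\}$ with $y=\binom{a_k}{a_{k-1}}\cdots\binom{a_2}{a_1}$ and $z=\binom{n-a_1}{n-a_2}\cdots\binom{n-a_{k-1}}{n-a_k}$, and then claims the uniform bound $\max\{y,z\}\ge P$ for every $k$-chain, arguing that a gap of size at least $2$ can be shrunk so as to decrease $y$ (gap placed at the top, proviso $a_{k-1}>0$) or $z$ (gap at the bottom, proviso $a_2<n$), whence the all-ones chains should minimize $\max\{y,z\}$. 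Your example shows the uniform claim is false: for $k=2$ the chain $\emptyset\subset[n]$ has $y=z=1$ and $m=n!$, and for $k=3$ the chain $\emptyset\subset\{1\}\subset[n]$ has $\max\{y,z\}=n$ while $P\approx n^2/4$, even though there the provisos hold. The slip is that each reduction decreases only one of $y,z$ and can increase the other, so it need not decrease their maximum; consequently the paper's pointwise substitution $N\le n!/P$ in its final display is not justified and can even fail in aggregate (for $n=k=2$ and $F=\{\emptyset,\{1\},\{1,2\}\}$ one has $\sum N=4>c_F(F)\cdot n!/P=3$, though of course the theorem's conclusion still holds there). So a verbatim appeal to the paper's multiplicity bound would not answer your own (correct) objection: to close the argument along these lines you really do need a refinement, e.g.\ handling chains that contain sets of size near $0$ or near $n$ separately (such sets force many $k$-chains in $F$ outright) or weighting the symmetric chains so that a uniform multiplicity bound becomes true while the per-SCD estimate $\ge x$ survives. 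Until one of those is carried out, the proof is incomplete.
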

\noindent
Note that
\[
\prod_{i=1}^{k-1}\left(\left\lfloor\frac{n+k}{2}\right\rfloor-i+1\right)
\]
is the number of copies of $\mathcal P_k$ contained in $\mathcal B(n,k)$, with one endpoint of the chain being a particular set in the $k$th middle row.
Thus the family that consists of $\mathcal B(n,k-1)$ and $x$ sets from the $k$th middle row witnesses that the above bound is tight for
\[
x \le \binom{n}{\left\lfloor \frac{n}2\right\rfloor + (-1)^k\left\lfloor \frac{k}2\right\rfloor}.
\]

More generally, Kleitman~\cite{Kl68} has conjectured that for any $k$ the natural
construction (that selects subsets around the middle) minimizes
the number of chains $\mathcal{P}_k$ in $F$.  Our result
gives new information in support of this conjecture, verifying it
for $\abs{F}\le\Sigma(n,k)$.
We suspect that a stronger version of our SCD method, in which
weights are assigned, may lead to a proof of Kleitman's conjecture in full for general $k$.  So far our efforts in this direction that looked
very promising have not yet succeeded.  We cannot imagine that his
conjecture is not correct.

\section{Proof of Theorem~\ref{thm:supersaturation}}
As mentioned earlier, we shall use the symmetric chain decomposition of $\mathcal B_n$.

\begin{proof}[Proof of Theorem~\ref{thm:supersaturation}]
Given a poset $(P,\preceq)$ on $2^{[n]}$, let us say that a $k$-chain $A_1 \subset \dots \subset A_k$ of $F$ (in $\mathcal{B}_n$) is \emph{included} in $P$ if $A_1 \prec \dots \prec A_k$, and furthermore define $c_F(P)$ to be the number of $k$-chains of $F$ included in $P$.  For any SCD $\mathcal{C}$ of $\mathcal{B}_n$, let $P_\mathcal{C}$ be the poset on $2^{[n]}$ defined by taking the disjoint union of the chains in $\mathcal{C}$.
Let us fix the SCD $\mathcal{C}$.
By the pigeonhole principle, $P_\mathcal{C}$ includes at least $x$ $k$-chains of $F$, i.e.~$c_F(P_\mathcal{C}) \ge x$.
Each (non-trivial) permutation $\pi$ of $[n]$ applied to
$\mathcal{C}$ results in a new unique SCD $\pi(\mathcal{C})$ for
$\mathcal{B}_n$. Note that $\pi(\mathcal{C})\neq\pi'(\mathcal{C})$ for
distinct permutations $\pi$ and $\pi'$ of $[n]$.
By summing over the permutations $\pi$ of $[n]$, we obtain
\[
n!\cdot x \le \sum_\pi c_F(P_{\pi(\mathcal{C})}).
\]
Let us change the summation to sum over all $k$-chains of $F$.  For this, we define $N(n,A_1,\dots,A_k)$ to be the number of permutations $\pi$ such that $P_{\pi(\mathcal{C})}$ includes a given chain $A_1 \subset \dots \subset A_k$ of $F$.  We obtain
\[
\sum_\pi c_F(P_{\pi(\mathcal{C})}) = \sum_{A_1 \subset \dots \subset A_k; A_i \in F} N(n,A_1,\dots,A_k).
\]
Setting $a_i\coloneqq\abs{A_i}$ for each $i\in\{1,\ldots,k\}$,
it holds that
\[
N(n,A_1,\dots,A_k) = a_1!\cdot(a_2-a_1)!\cdots(a_k-a_{k-1})!\cdot(n-a_k)!\cdot\min\left\{\binom{n}{a_1},\binom{n}{a_k}\right\},
\]
where the last factor comes from the number of chains in a SCD that the given
chain could fit.  After some manipulation, we deduce that
\[
N(n,A_1,\dots,A_k) = \frac{n!}{\max\left\{
\binom{a_k}{a_{k-1}}\cdots\binom{a_2}{a_1},
\binom{n-a_1}{n-a_2}\cdots\binom{n-a_{k-1}}{n-a_k}
\right\}}.
\]
We shall find a general upper bound for $N(n,A_1,\dots,A_k)$ by minimizing the maximum of $y$
defined as $\binom{a_k}{a_{k-1}}\cdots\binom{a_2}{a_1}$ and $z$ defined as $\binom{n-a_1}{n-a_2}\cdots\binom{n-a_{k-1}}{n-a_k}$.  Note the following binomial identity:
\[
\binom{a+i+j}{a+i}\binom{a+i}{a} = \binom{a+i+j}{a+j}\binom{a+j}{a}.
\]
As a consequence of this, the values of $y$ and $z$ are invariant as long as
the multiset of all differences between consecutive values of $a_i$ is
invariant.  By this fact, if there is some difference in this multiset that is
at least $2$, we may assume without loss of generality that this ``large''
difference is between $a_{k-1}$ and $a_k$. It follows that
\[
y'\coloneqq y\cdot\frac{\binom{a_{k-1}+1}{a_{k-1}}}{\binom{a_k}{a_{k-1}}}
= y\cdot\frac{a_{k-1}+1}{\binom{a_k}{a_{k-1}}}
< y,
\]
provided that $a_{k-1} > 0$. Similarly,
\[
z'\coloneqq z\cdot\frac{\binom{n-a_2+1}{n-a_2}}{\binom{n-a_1}{n-a_2}}
= z\cdot\frac{n-a_2+1}{\binom{n-a_1}{n-a_2}}
< z,
\]
provided that $a_2 < n$.  It follows that $y$ and $z$ are minimized when the multiset of differences is the multiset of all ones, i.e.~with
\[
y = \frac{a_k!}{(a_k-k+1)!}
\quad\text{and}\quad
z = \frac{(n-a_k+k-1)!}{(n-a_k)!}.
\]
The maximum of $y$ and $z$ is then minimized by choosing $a_k$ to be $\left\lfloor \frac{n+k}{2}\right\rfloor$, so
\[
n!\cdot x \le
\sum_{\substack{A_1 \subset \dots \subset A_k\\A_i \in F}} N(n,A_1,\dots,A_k)
 \le \sum_{\substack{A_1 \subset \dots \subset A_k\\A_i \in F}} \frac{n!}{\prod_{i=1}^{k-1}\left(\left\lfloor\frac{n+k}{2}\right\rfloor-i+1\right)}
= c_F(F)\cdot \frac{n!}{\prod_{i=1}^{k-1}\left(\left\lfloor\frac{n+k}{2}\right\rfloor-i+1\right)},
\]
as required.
\end{proof}

\section*{Remarks}
Our result was presented by the second author in Prague in June
2012~\cite{Gri12}. In the preparation of this manuscript, we learned that recently Das, Gan and
Sudakov have independently pursued a similar line of research and possibly obtained results similar to ours.

\end{document}